\documentclass[11pt]{article}

\usepackage[margin=1in]{geometry}
\usepackage{amsmath,amssymb,amsthm}
\usepackage{algorithm}
\usepackage{algpseudocode}
\usepackage[colorlinks=true,linkcolor=blue,citecolor=blue,urlcolor=blue]{hyperref}
\usepackage{graphicx}
\newtheorem{theorem}{Theorem}
\newtheorem{proposition}{Proposition}
\newtheorem{remark}{Remark}

\newenvironment{classification}
  {\par\medskip\noindent\textbf{Mathematics Subject Classification.}\ }
  {\par}
\newenvironment{keywords}
  {\par\medskip\noindent\textbf{Keywords.}\ }
  {\par\medskip}

\DeclareMathOperator{\prox}{prox}
\newcommand{\norm}[1]{\lVert#1\rVert}

\title{ADMM Fails to Achieve an $O(K^{-1})$ Ergodic KKT Residual Bound}

\author{
Kaihuang Chen
\thanks{Department of Applied Mathematics, The Hong Kong Polytechnic University, Hung Hom, Hong Kong \\ Email: kaihuang.chen@connect.polyu.hk}
\and
Defeng Sun
\thanks{Department of Applied Mathematics, The Hong Kong Polytechnic University, Hung Hom, Hong Kong \\ Email: defeng.sun@polyu.edu.hk}
\and
Yancheng Yuan
\thanks{Department of Applied Mathematics, The Hong Kong Polytechnic University, Hung Hom, Hong Kong \\ Email: yancheng.yuan@polyu.edu.hk}
\and
Guojun Zhang
\thanks{Department of Applied Mathematics, The Hong Kong Polytechnic University, Hung Hom, Hong Kong \\ Email: guojun.zhang@connect.polyu.hk}
\and
Xinyuan Zhao
\thanks{Department of Mathematics, Beijing University of Technology, Beijing, P.R. China \\ Email: xyzhao@bjut.edu.cn}
}

\date{\today}

\begin{document}
\maketitle

\begin{abstract}
The Karush--Kuhn--Tucker (KKT) residual is a fundamental measure of
first-order optimality and, under an error bound condition, is comparable to the distance to the KKT solution set up to constant factors. Despite the $O(K^{-1})$ ergodic rates known for objective
error and feasibility violations, we show that the KKT residual of
classical ADMM cannot, in general, satisfy a uniform $O(K^{-1})$
bound. Specifically, we construct a fixed-dimensional,
horizon-dependent family of two-block convex optimization problems
for which the KKT residual is $\Omega(K^{-1/2})$ at both the last
iterate and the equal-weight ergodic average at the prescribed
horizon $K$. Consequently, a uniform $O(K^{-1})$ KKT residual bound
is impossible for either output.

\end{abstract}

\begin{classification}
90C25, 90C60, 68Q25.
\end{classification}

\begin{keywords}
Alternating direction method of multipliers,
KKT residual,
Iteration complexity
\end{keywords}

\section{Introduction}
Let $\mathcal Y$, $\mathcal Z$, and $\mathcal X$ be finite-dimensional real
Euclidean spaces. We consider the two-block linearly constrained convex
optimization problem
\begin{equation}
\label{eq:general-model}
    \min_{y\in\mathcal Y,\,z\in\mathcal Z}
    \left\{ f(y)+g(z) \;\middle|\; B_1y+B_2z=c \right\},
\end{equation}
where $f:\mathcal Y\to(-\infty,+\infty]$ and
$g:\mathcal Z\to(-\infty,+\infty]$ are proper closed convex functions,
$B_1:\mathcal Y\to\mathcal X$ and $B_2:\mathcal Z\to\mathcal X$ are
linear operators, and $c\in\mathcal X$. For a penalty parameter $\sigma>0$, the augmented Lagrangian associated
with \eqref{eq:general-model} is
\begin{equation}
\label{eq:augmented-lagrangian}
    \mathcal L_\sigma(y,z;x)
    :=f(y)+g(z)
    +\langle x,B_1y+B_2z-c\rangle
    +\frac{\sigma}{2}\norm{B_1y+B_2z-c}^2.
\end{equation}
The Karush--Kuhn--Tucker (KKT) conditions of \eqref{eq:general-model} are
\begin{equation}
\label{eq:kkt}
    0\in\partial f(y)+B_1^*x,\qquad
    0\in\partial g(z)+B_2^*x,\qquad
    B_1y+B_2z-c=0.
\end{equation}We assume that the KKT solution set of \eqref{eq:general-model} is nonempty and denote the optimal objective value by $p^\star.$
For $w=(y,z,x)$, we measure the violation of \eqref{eq:kkt} by the KKT
residual
\begin{equation}
\label{eq:general-residual}
    \mathcal R(w):=
    \begin{pmatrix}
        y-\prox_f(y-B_1^*x)\\
        z-\prox_g(z-B_2^*x)\\
        c-B_1y-B_2z
    \end{pmatrix}.
\end{equation}
Clearly, $\mathcal R(w)=0$ if and only if $w$ satisfies
\eqref{eq:kkt}.

Given a step-length parameter
$\tau\in\bigl(0,(1+\sqrt5)/2\bigr)$, the alternating direction method of
multipliers (ADMM) \cite{GlowinskiMarroco1975,GabayMercier1976} for
solving \eqref{eq:general-model} generates a sequence
$\{(y^k,z^k,x^k)\}$ according to
\begin{equation}
\label{eq:admm-al}
\left\{
\begin{aligned}
    y^{k+1}
    &=\operatorname*{argmin}_{y\in\mathcal Y}
      \mathcal L_\sigma(y,z^{k};x^{k}),\\
       z^{k+1}
    &=\operatorname*{argmin}_{z\in\mathcal Z}
      \mathcal L_\sigma(y^{k+1},z;x^k),\\
         x^{k+1}
    &=x^k+\tau\sigma(B_1y^{k+1}+B_2z^{k+1}-c),\\
\end{aligned}
\right.
\end{equation}
Significant progress has been made in the convergence-rate analysis of
ADMM for solving \eqref{eq:general-model}. In particular, Monteiro and
Svaiter \cite{monteiro2013iteration} established an $O(k^{-1})$ ergodic
complexity bound for ADMM with $\tau=1$ in terms of an
$\varepsilon$-KKT certificate. Specifically, at an ergodic point
$(\widetilde y_a^k,\widetilde z_a^k,\widetilde x_a^k)$, there exist
$r_{y,a}^k$, $r_{z,a}^k$ and
$\varepsilon_{y,a}^k,\varepsilon_{z,a}^k\geq0$ such that
\begin{equation}
\label{eq:ms-epsilon-kkt}
\begin{aligned}
    &r_{y,a}^k
    \in\partial_{\varepsilon_{y,a}^k}f(\widetilde y_a^k)
       +B_1^*\widetilde x_a^k,\qquad
    r_{z,a}^k
    \in\partial_{\varepsilon_{z,a}^k}g(\widetilde z_a^k)
       +B_2^*\widetilde x_a^k,\\
    &\norm{r_{y,a}^k}+\norm{r_{z,a}^k}
     +\norm{B_1\widetilde y_a^k+B_2\widetilde z_a^k-c}
     =O(k^{-1}),\\
    &\varepsilon_{y,a}^k+\varepsilon_{z,a}^k=O(k^{-1}),
\end{aligned}
\end{equation}
where
\[
    \partial_\varepsilon h(u)
    :=
    \left\{v\ \middle|\
    h(w)\geq h(u)+\langle v,w-u\rangle-\varepsilon
    \quad\forall\,w\right\}
\]
denotes the $\varepsilon$-subdifferential of $h$. Importantly, the
$O(k^{-1})$ approximate KKT bound in \eqref{eq:ms-epsilon-kkt}
directly yields an $O(k^{-1/2})$ bound for the exact KKT residual
\eqref{eq:general-residual}; see
\cite[Lemma~D.1]{ZhangChenYuanZhaoSun2024}.

Davis and Yin \cite{davisyin2017rates} later established, for classical
ADMM with $\tau=1$, the following ergodic and nonergodic rates for the
objective error and primal feasibility:
\begin{equation}
\label{eq:dy-rates}
\begin{aligned}
    \left|f(y_a^k)+g(z_a^k)-p^\star\right|
    +\norm{B_1y_a^k+B_2z_a^k-c}
    &=O(k^{-1}),\\
    \left|f(y^k)+g(z^k)-p^\star\right|
    +\norm{B_1y^k+B_2z^k-c}
    &=o(k^{-1/2}),
\end{aligned}
\end{equation}
where
\[
    y_a^k:=\frac{1}{k}\sum_{i=1}^k y^i,
    \qquad
    z_a^k:=\frac{1}{k}\sum_{i=1}^k z^i.
\]
They further showed, via the Douglas--Rachford--ADMM equivalence, that for every $\alpha>1/2$, there exists an infinite-dimensional instance
for which the nonergodic primal feasibility
residual of ADMM satisfies
\[
    \norm{B_1y^k+B_2z^k-c}
    =\Omega(k^{-\alpha}).
\]

Regarding the KKT residual, Cui et al.~\cite{cui2016majorized} analyzed a majorized proximal ADMM that includes classical ADMM as a special case. For
$0<\tau<(1+\sqrt{5})/2$, they established the best-iterate nonergodic
KKT residual rate
\begin{equation}
\label{eq:cui-nonergodic-rate}
\begin{aligned}
\min_{1\leq i\leq k}\Big\{&
    \operatorname{dist}\!\left(
        0,\partial f(y^{i+1})+B_1^*x^{i+1}\right)
    +\operatorname{dist}\!\left(
        0,\partial g(z^{i+1})+B_2^*x^{i+1}\right)\\
&\qquad
    +\norm{B_1y^{i+1}+B_2z^{i+1}-c}
    \Big\}
    =o(k^{-1/2}).
\end{aligned}
\end{equation}

Although ergodic ADMM enjoys $O(k^{-1})$ rates for objective error and primal feasibility, these measures need not accurately reflect the distance to the KKT solution set. In contrast, an error-bound condition together with the Lipschitz continuity of the residual mapping implies that the KKT residual \eqref{eq:general-residual} is comparable to this distance up to constant factors. It is therefore natural to ask whether the ergodic average of ADMM can also attain an $O(k^{-1})$ rate for the KKT residual.

In this note, we answer this question negatively by establishing an
$\Omega(k^{-1/2})$ finite-horizon lower bound for the ergodic KKT
residual, together with a lower bound of the same order for the last
iterate. Specifically, for each prescribed square horizon $K$, we
construct a consensus instance of \eqref{eq:general-model} in the fixed
space $\mathbb R^2\times\mathbb R$ such that
\begin{equation}
\label{eq:intro-result}
    \norm{\mathcal R_K(w^K)}
    \geq \frac{1}{4\sqrt{2K}},
    \qquad
    \norm{\mathcal R_K(w_a^K)}
    \geq \frac{1}{8\sqrt{2K}}.
\end{equation}
Here, $\mathcal R_K$ denotes the KKT residual mapping of the instance
associated with horizon $K$, and $w_a^K$ denotes the corresponding
equal-weight ergodic average. Since the horizons $K$ are unbounded,
\eqref{eq:intro-result} rules out any uniform $O(K^{-1})$ bound on the
exact KKT residual for either the last iterate or the equal-weight
ergodic average of ADMM over this problem family.

The remainder of the paper is organized as follows. Section~2 constructs
the hard family, derives the corresponding ADMM iterates, and proves
\eqref{eq:intro-result}. Section~3 discusses recent developments in
accelerated ADMM and their applications, while Section~4 concludes the
paper.

\section{A lower bound for the KKT residual of ADMM}
\label{sec:main-result}

In this section, for each prescribed square horizon $K$, we construct a
problem instance in a fixed-dimensional space for which the KKT
residual of classical ADMM admits an $\Omega(K^{-1/2})$ lower bound at
both the last iterate and the equal-weight ergodic average.

\subsection{A lower bound instance}

Fix an integer $q\geq16$ and set
\[
    K:=q^2,
    \qquad
    A_0:=\frac{1}{\sqrt{2}}.
\]
Let
\[
    \mathcal H:=\mathbb R^2\times\mathbb R,
    \qquad
    y=(y_{\rm s},y_{\rm n}),\quad
    z=(z_{\rm s},z_{\rm n}),
\]
where $y_{\rm s},z_{\rm s}\in\mathbb R^2$ and
$y_{\rm n},z_{\rm n}\in\mathbb R$. Let $e_1=(1,0)^\top\in\mathbb R^2$ and define
\[
    U:=\operatorname{span}(e_1),
    \qquad
    V_K:=
    \operatorname{span}
    \begin{pmatrix}
        \cos\theta_K\\
        \sin\theta_K
    \end{pmatrix},
    \qquad
    \theta_K:=\frac{1}{q}.
\]
Also define
\[
    \phi_K:\mathbb R\to\mathbb R,
    \qquad
    \phi_K(t):=\mu_K\sqrt{t^2+\varepsilon_K^2},
\]
with
\[
    \mu_K:=\frac{A_0}{4q},
    \qquad
    \varepsilon_K:=\frac{\mu_K}{10}.
\]
For a closed convex set $C$, let $\delta_C$ denote its indicator function,
and define
\[
    f_K(y):=\delta_U(y_{\rm s}),
    \qquad
    g_K(z):=\delta_{V_K}(z_{\rm s})+\phi_K(z_{\rm n}).
\]
We consider the consensus problem
\begin{equation}
\label{eq:hard-problem}
    \mathcal P_K:\qquad
    \min_{y,z\in\mathcal H}
    \left\{
        f_K(y)+g_K(z)
        \;\middle|\;
        y-z=0
    \right\}.
\end{equation}

Since $U\cap V_K=\{0\}$ and $\phi_K$ is uniquely minimized at $0$,
the unique primal solution is $y^\star=z^\star=0$. The KKT conditions further
imply
\[
    x_{\rm n}^\star=0,
    \qquad
    x_{\rm s}^\star\in U^\perp\cap V_K^\perp=\{0\}.
\]
Hence, $\mathcal P_K$ has the unique KKT point
\[
    w^\star:=(y^\star,z^\star,x^\star)=(0,0,0).
\]
\subsection{ADMM and its two outputs}

Problem \eqref{eq:hard-problem} is a special case of
\eqref{eq:general-model} with $B_1=I$, $B_2=-I$, and $c=0$. For
$\sigma=1$, the corresponding augmented Lagrangian is
\begin{equation}
\label{eq:hard-augmented-lagrangian}
    \mathcal L_{1,K}(y,z;x)
    :=
    f_K(y)+g_K(z)
    +\langle x,y-z\rangle
    +\frac12\norm{y-z}^2.
\end{equation}
The KKT residual mapping \eqref{eq:general-residual} specializes to
\begin{equation}
\label{eq:consensus-residual}
    \mathcal R_K(y,z,x)
    :=
    \begin{pmatrix}
        y-\prox_{f_K}(y-x)\\
        z-\prox_{g_K}(z+x)\\
        z-y
    \end{pmatrix}.
\end{equation}

We apply ADMM with penalty parameter $\sigma=1$ and step-length
$\tau=1$, initialized at
\begin{equation}
\label{eq:initial-point}
    y_{\rm s}^0=z_{\rm s}^0=0,
    \quad
    x_{\rm s}^0=A_0e_1,
    \qquad
    y_{\rm n}^0=A_0,
    \quad
    z_{\rm n}^0=x_{\rm n}^0=0.
\end{equation}
Since $w^\star=0$ and $A_0=1/\sqrt{2}$, the initial distance to the KKT
point is
\begin{equation}
\label{eq:R0}
    R_0
    :=
    \norm{w^0-w^\star}
    =
    \sqrt{2}\,A_0
    =
    1.
\end{equation}

For convenience, we cyclically shift the starting point of the ADMM update cycle \eqref{eq:admm-al}, without changing any of its subproblems, and write the equivalent update order as $z\to x\to y$. The corresponding iteration, together with its last-iterate and ergodic
outputs, is given in Algorithm~\ref{alg:admm}.

\begin{algorithm}[H]
\caption{ADMM with $\sigma=\tau=1$ on $\mathcal P_K$}
\label{alg:admm}
\begin{algorithmic}[1]
\Require problem $\mathcal P_K$, initial point
    $w^0=(y^0,z^0,x^0)$ from \eqref{eq:initial-point}, and horizon
    $K=q^2$
\For{$k=0,\ldots,K-1$}
    \State $\displaystyle
        z^{k+1}=
        \operatorname*{argmin}_{z\in\mathcal H}
        \mathcal L_{1,K}(y^k,z;x^k)$
    \State $\displaystyle
        x^{k+1}= x^k+y^k-z^{k+1}$
    \State $\displaystyle
        y^{k+1}=
        \operatorname*{argmin}_{y\in\mathcal H}
        \mathcal L_{1,K}(y,z^{k+1};x^{k+1})$
    \State $w^{k+1}=(y^{k+1},z^{k+1},x^{k+1})$
\EndFor
\State \Return $w^K$ and
        $\displaystyle w_a^K=\frac{1}{K}\sum_{j=1}^K w^j$
\end{algorithmic}
\end{algorithm}

\subsection{Complexity lower bound}

We now establish the lower bounds for the two outputs of
Algorithm~\ref{alg:admm}.

\begin{theorem}
\label{thm:lower-bound}
For every integer $q\geq16$, with $K=q^2$, there exists a problem
instance $\mathcal P_K$ such that the last iterate $w^K$ and the
equal-weight ergodic average
$w_a^K:=K^{-1}\sum_{j=1}^K w^j$ generated by Algorithm~\ref{alg:admm} satisfy
\begin{equation}
\label{eq:main-lower-bounds}
\begin{aligned}
    \norm{\mathcal R_K(w^K)}
    &\geq
    \frac{R_0}{4\sqrt{2K}},\\
    \norm{\mathcal R_K(w_a^K)}
    &\geq
    \frac{R_0}{8\sqrt{2K}}.
\end{aligned}
\end{equation}
Consequently, neither output admits a uniform $O(R_0/K)$ worst-case
bound for the exact KKT residual over this problem family.
\end{theorem}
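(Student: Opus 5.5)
The plan is to exploit the fact that $\mathcal P_K$ decouples completely into an $\mathrm s$-block (two lines $U$ and $V_K$ in $\mathbb R^2$ at angle $\theta_K=1/q$) and an $\mathrm n$-block (a scalar problem with the smoothed absolute value $\phi_K$). Since $f_K,g_K$ are separable across the blocks, so are Algorithm~\ref{alg:admm} and the residual \eqref{eq:consensus-residual}. Hence $\norm{\mathcal R_K(w)}^2$ is the sum of the squared $\mathrm s$- and $\mathrm n$-block residuals, and for each output it suffices to lower bound one block.

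\emph{Step 1 ($\mathrm s$-block, exact linear recurrence).} With $\sigma=\tau=1$ the updates read
\[
z_{\rm s}^{k+1}=P_{V_K}(y_{\rm s}^k+x_{\rm s}^k),\qquad
x_{\rm s}^{k+1}=x_{\rm s}^k+y_{\rm s}^k-z_{\rm s}^{k+1},\qquad
y_{\rm s}^{k+1}=P_{U}(z_{\rm s}^{k+1}-x_{\rm s}^{k+1}).
\]
This is Douglas--Rachford on two lines. For the governing sequence $u^k:=y_{\rm s}^k+x_{\rm s}^k$, the map $u^k\mapsto u^{k+1}$ is linear and equals $\cos\theta_K$ times a rotation by $\theta_K$. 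This is the classical fact for two lines, and I would verify it by writing $P_U,P_{V_K}$ as explicit $2\times2$ matrices. From $u^0=A_0e_1$ I will then obtain closed forms: $y_{\rm s}^k,z_{\rm s}^k,x_{\rm s}^k$ all have modulus of order $A_0\cos^k\theta_K$, with phase $k\theta_K$.

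Because $\delta_U$ and $\delta_{V_K}$ are indicators of subspaces, the $\mathrm s$-block residual is a \emph{linear} function of $w$. In particular, its third component $z_{\rm s}-y_{\rm s}$ has size about $\sin\theta_K$ times the amplitude. At the last iterate, $\cos^{q^2}(1/q)\ge e^{-1/2-O(q^{-2})}$ and $\sin(1/q)\ge (1-\tfrac{1}{6q^2})/q$. I expect this to yield $\norm{\mathcal R_K(w^K)}\ge A_0/(4q)=R_0/(4\sqrt{2K})$; the constant $1/4$ should absorb the factor $e^{-1/2}\approx0.61$ and the projection geometry, and $q\ge16$ is used to control these Taylor errors.

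\emph{Step 2 (ergodic average).} By linearity, the $\mathrm s$-block residual of $w_a^K$ is the average of a geometric sum $\frac1K\sum_{j=1}^K(\cos\theta_Ke^{i\theta_K})^j$. Its modulus is about $\frac{1}{K|1-\cos\theta_K e^{i\theta_K}|}\approx\frac{1}{K\theta_K}=\frac1q$, up to the phase factor $|1-(\cos\theta_Ke^{i\theta_K})^K|$. This factor is bounded below because the modulus $e^{-1/2}<1$. Multiplying by the residual factor $\sin\theta_K\approx 1/q$ gives only $O(1/q^2)$, which is too small. So the ergodic bound must instead come from the $\mathrm n$-block, whose residual is nonlinear in $w$.

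\emph{Step 3 ($\mathrm n$-block).} Here $z_{\rm n}^{k+1}=\prox_{\phi_K}(y_{\rm n}^k+x_{\rm n}^k)$, and $y_{\rm n}^{k+1}=z_{\rm n}^{k+1}-x_{\rm n}^{k+1}$ (since $f$ is free in this coordinate). While $|t|\gg\varepsilon_K$, $\prox_{\phi_K}$ is essentially a shift by $\mu_K$. Starting from $y_{\rm n}^0=A_0=4q\mu_K$, I will show by induction that during roughly the first $4q$ steps $x_{\rm n}^k\approx-\mu_K$ while $z_{\rm n}^k$ decreases linearly from $A_0$ toward $0$, with errors controlled by $\varepsilon_K=\mu_K/10$. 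After that the iterates stay in a neighborhood of size $O(\varepsilon_K)$ of $0$. Consequently $\bar z_{\rm n}:=\frac1K\sum_j z_{\rm n}^j\approx\frac{1}{q^2}\cdot\frac{A_0\cdot4q}{2}=\frac{2A_0}{q}$, which is large compared with $\mu_K$, while $\bar x_{\rm n}$ is of size $O(\mu_K/q)$. The second residual component
\[
\bar z_{\rm n}-\prox_{\phi_K}(\bar z_{\rm n}+\bar x_{\rm n})
\]
then equals $\phi_K'$ evaluated near $\bar z_{\rm n}$, i.e.\ essentially $\mu_K=A_0/(4q)$, because $\bar z_{\rm n}\gg\varepsilon_K$. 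Taking a margin of $1/2$ for the errors gives $\norm{\mathcal R_K(w_a^K)}\ge A_0/(8q)=R_0/(8\sqrt{2K})$.

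\emph{Expected main obstacle.} The hard step is Step 3: making the scalar $\mathrm n$-block dynamics quantitative. This requires explicit two-sided bounds on $\prox_{\phi_K}$ in the regime $|t|\ge c\,\mu_K$, an induction showing $x_{\rm n}^k\in[-\mu_K,-\mu_K+\text{small}]$ during the descent phase, and a crude bound $|z_{\rm n}^k|,|x_{\rm n}^k|\le 2\mu_K$ afterwards. Together these must ensure that the average $\bar z_{\rm n}$ stays at least a constant multiple of $A_0/q$, so that $\phi_K'(\cdot)\ge\mu_K/2$ at the relevant point. The final sentence of the theorem then follows immediately, since $R_0=1$ and $K=q^2$ is unbounded.
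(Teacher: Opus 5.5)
Your architecture is the same as the paper's. You use the separability across blocks and the two-line Douglas--Rachford map $u^{k+1}=\cos\theta_K\,R_{-\theta_K}u^k$. You use the feasibility component, which satisfies $\norm{z_{\rm s}^{k+1}-y_{\rm s}^{k+1}}=\sin\theta_K\norm{u^k}$ exactly, for the last iterate. You correctly route the ergodic bound through the scalar block. Steps 1--2 are sound, and your constants have ample slack.

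The problem is in Step 3. The sign of $x_{\rm n}^k$ is wrong. With $p_{\rm n}^k=y_{\rm n}^k+x_{\rm n}^k$ one has $z_{\rm n}^{k+1}=p_{\rm n}^{k+1}=J_Kp_{\rm n}^k$, where $J_K=\prox_{\phi_K}$, and
\[
x_{\rm n}^{k+1}=p_{\rm n}^k-p_{\rm n}^{k+1}=\phi_K'(p_{\rm n}^{k+1})\in[0,\mu_K).
\]
So $x_{\rm n}^k\approx+\mu_K$ during the descent, not $-\mu_K$. This matters because the quantity you must bound is
\[
\bar z_{\rm n}-J_K(\bar z_{\rm n}+\bar x_{\rm n})=\phi_K'\bigl(J_K(\bar z_{\rm n}+\bar x_{\rm n})\bigr)-\bar x_{\rm n},
\qquad \phi_K'<\mu_K .
\]
With $\bar x_{\rm n}\ge0$, the second term works against you, so you need $\bar x_{\rm n}$ well below $\mu_K/2$. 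Your sketch drops this term. The route you outline for controlling $x_{\rm n}^k$ does not give such a bound. A crude bound $|x_{\rm n}^k|\le2\mu_K$ on the roughly $q^2-4q$ post-descent iterates only yields $\bar x_{\rm n}\lesssim2\mu_K$, which wipes out everything $\phi_K'$ contributes. An $O(\varepsilon_K)$ bound with an unspecified constant is likewise not enough. Under your sign, $\bar x_{\rm n}\le0$ would only help, which is presumably why this looked harmless.

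The missing idea is telescoping. Since $x_{\rm n}^j=p_{\rm n}^{j-1}-p_{\rm n}^j$ and $p_{\rm n}^j\ge0$ (because $J_K$ preserves nonnegativity), you get
\[
\sum_{j=1}^Kx_{\rm n}^j=A_0-p_{\rm n}^K\in[0,A_0],
\]
hence $0\le\bar x_{\rm n}\le A_0/K=4\mu_K/q\le\mu_K/4$. This step, not the Taylor errors of Step 1, is where $q\ge16$ is needed.

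The same one-sided relation $p_{\rm n}^{j-1}-p_{\rm n}^j<\mu_K$ gives $p_{\rm n}^j\ge A_0-j\mu_K$. Summing over $j\le q$ gives $\bar z_{\rm n}\ge3\mu_K$; summing over $j\le4q$ gives your $\approx8\mu_K$. Since $t-J_K(t)\le\mu_K$ for $t\ge0$, it follows that $J_K(\bar z_{\rm n}+\bar x_{\rm n})\ge\bar z_{\rm n}-\mu_K\ge2\mu_K\gg\varepsilon_K$. Therefore $\phi_K'(\cdot)\ge3\mu_K/4$, and the residual is at least $\mu_K/2$.

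This makes the two-sided prox estimates, the $\varepsilon_K$-controlled induction over the descent phase, and any analysis of the tail unnecessary. It is exactly how the paper's proof proceeds.
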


\begin{proof}
The two components of the product space $\mathcal H$ yield the two
lower bounds separately: the subspace component gives the lower bound
for the last iterate, while the scalar component gives the lower bound
for the ergodic average.

\paragraph{Last iterate.}
Let $P_U$ and $P_{V_K}$ denote the orthogonal projections onto $U$ and
$V_K$, respectively, and define
\[
    p_{\rm s}^k:=y_{\rm s}^k+x_{\rm s}^k.
\]
The subspace component of Algorithm~\ref{alg:admm} satisfies
\[
\begin{aligned}
    z_{\rm s}^{k+1}
    &=P_{V_K}p_{\rm s}^k,\\
    x_{\rm s}^{k+1}
    &=(I-P_{V_K})p_{\rm s}^k,\\
    y_{\rm s}^{k+1}
    &=P_U(2P_{V_K}-I)p_{\rm s}^k.
\end{aligned}
\]
Hence
\begin{equation}
\label{eq:subspace-dynamics}
\begin{aligned}
    p_{\rm s}^{k+1}
    &=
    \bigl(P_U(2P_{V_K}-I)+I-P_{V_K}\bigr)p_{\rm s}^k=
    \cos\theta_K\,R_{-\theta_K}p_{\rm s}^k,
\end{aligned}
\end{equation}
where
\[
    R_{-\theta_K}
    :=
    \begin{pmatrix}
        \cos\theta_K & \sin\theta_K\\
        -\sin\theta_K & \cos\theta_K
    \end{pmatrix}
\]
is the rotation matrix through the angle $-\theta_K$.
Since $R_{-\theta_K}$ is orthogonal and
$p_{\rm s}^0=A_0e_1$, \eqref{eq:subspace-dynamics} gives
\[
    \norm{p_{\rm s}^k}
    =
    A_0\cos^k\theta_K.
\]
Moreover, a direct calculation yields
\[
    \norm{z_{\rm s}^{k+1}-y_{\rm s}^{k+1}}
    =
    \sin\theta_K\,\norm{p_{\rm s}^k}.
\]
Therefore, using the feasibility component of
\eqref{eq:consensus-residual},
\[
\begin{aligned}
    \norm{\mathcal R_K(w^K)}
    &\geq
    \norm{z_{\rm s}^{K}-y_{\rm s}^{K}}=
    A_0\sin\theta_K\cos^{K-1}\theta_K=
    A_0\sin(1/q)\cos^{K-1}(1/q),
\end{aligned}
\]
where $\theta_K=1/q$. We next bound the two factors separately. Since
$\sin t\geq t-t^3/6$ for $t\geq0$,
\[
    q\sin(1/q)
    \geq
    1-\frac{1}{6q^2}
    \geq
    \frac12.
\]
Also, since $\cos t\geq1-t^2/2$,
\[
    \cos^{K-1}(1/q)
    =
    \cos^{q^2-1}(1/q)
    \geq
    \left(1-\frac{1}{2q^2}\right)^{q^2-1}.
\]
By Bernoulli's inequality,
\[
    (1+t)^m\geq 1+mt
    \qquad
    \text{for }t\geq-1,\; m\in\mathbb N,
\]
with
\[
    t=-\frac{1}{2q^2},
    \qquad
    m=q^2-1,
\]
we obtain
\[
\begin{aligned}
    \left(1-\frac{1}{2q^2}\right)^{q^2-1}
    &\geq
    1-\frac{q^2-1}{2q^2}
    =
    \frac12+\frac{1}{2q^2}
    \geq
    \frac12.
\end{aligned}
\]
Consequently,
\begin{equation}
\label{eq:last-lower-bound}
    \norm{\mathcal R_K(w^K)}
    \geq
    \frac{A_0}{4q}
    =
    \frac{1}{4\sqrt{2K}}
    =
    \frac{R_0}{4\sqrt{2K}},
\end{equation}
where $K=q^2$, $A_0=1/\sqrt{2}$, and $R_0=1$.

\paragraph{Equal-weight ergodic average.}
Let
\[
    J_K:=\prox_{\phi_K},
    \qquad
    p_{\rm n}^k:=y_{\rm n}^k+x_{\rm n}^k.
\]
Since $p_{\rm n}^0=A_0$, the scalar component of
Algorithm~\ref{alg:admm} satisfies
\begin{equation}
\label{eq:scalar-dynamics}
\begin{aligned}
    p_{\rm n}^{k+1}
    &=J_Kp_{\rm n}^k,\\
    z_{\rm n}^{k+1}
    &=p_{\rm n}^{k+1},\\
    x_{\rm n}^{k+1}
    &=p_{\rm n}^k-p_{\rm n}^{k+1},\\
    y_{\rm n}^{k+1}
    &=2p_{\rm n}^{k+1}-p_{\rm n}^k.
\end{aligned}
\end{equation}
Since $\phi_K$ is even and convex, its proximal mapping preserves
nonnegativity. Hence $p_{\rm n}^j\geq0$ for all $j$. Moreover, the
proximal optimality condition for $p_{\rm n}^j=J_Kp_{\rm n}^{j-1}$ gives
\[
    p_{\rm n}^{j-1}-p_{\rm n}^{j}
    =
    \mu_K
    \frac{p_{\rm n}^{j}}
         {\sqrt{(p_{\rm n}^{j})^2+\varepsilon_K^2}}
    <\mu_K.
\]
Therefore,
\[
    p_{\rm n}^{j}
    \geq
    A_0-j\mu_K
    \geq
    \frac{3A_0}{4},
    \qquad
    1\leq j\leq q,
\]
where we used $\mu_K=A_0/(4q)$.

Using the first $q$ terms in the ergodic average and recalling that
$K=q^2$, we obtain
\[
\begin{aligned}
    z_{a,{\rm n}}^K
    &=
    \frac1K\sum_{j=1}^K p_{\rm n}^j\geq
    \frac1K\sum_{j=1}^q p_{\rm n}^j
    \geq
    \frac{3A_0q}{4K}
    =
    3\mu_K.
\end{aligned}
\]
On the other hand, by telescoping \eqref{eq:scalar-dynamics},
\[
\begin{aligned}
    x_{a,{\rm n}}^K
    &=
    \frac1K\sum_{j=1}^K
    (p_{\rm n}^{j-1}-p_{\rm n}^{j})=
    \frac{A_0-p_{\rm n}^K}{K}\leq
    \frac{A_0}{q^2}
    =
    \frac{4\mu_K}{q}
    \leq
    \frac{\mu_K}{4},
\end{aligned}
\]
where the last inequality follows from $q\geq16$. Thus
\begin{equation}
\label{eq:average-components}
    z_{a,{\rm n}}^K\geq3\mu_K,
    \qquad
    0\leq x_{a,{\rm n}}^K\leq\frac{\mu_K}{4}.
\end{equation}
By the characterization of the proximal mapping and the differentiability of $\phi_K$, we have
\[
\begin{aligned}
    &z_{a,{\rm n}}^K+x_{a,{\rm n}}^K
    -
    J_K\!\left(
        z_{a,{\rm n}}^K+x_{a,{\rm n}}^K
    \right)=
    \mu_K
    \frac{
        J_K\!\left(
            z_{a,{\rm n}}^K+x_{a,{\rm n}}^K
        \right)
    }{
        \sqrt{
            J_K\!\left(
                z_{a,{\rm n}}^K+x_{a,{\rm n}}^K
            \right)^2
            +\varepsilon_K^2
        }
    }.
\end{aligned}
\]
In particular,
\[
    0
    \leq
    z_{a,{\rm n}}^K+x_{a,{\rm n}}^K
    -
    J_K\!\left(
        z_{a,{\rm n}}^K+x_{a,{\rm n}}^K
    \right)
    \leq
    \mu_K.
\]
Hence, by \eqref{eq:average-components},
\[
\begin{aligned}
    J_K\!\left(
        z_{a,{\rm n}}^K+x_{a,{\rm n}}^K
    \right)
    &\geq
    z_{a,{\rm n}}^K-\mu_K
    \geq
    2\mu_K.
\end{aligned}
\]
Since $\varepsilon_K=\mu_K/10$, it follows that
\[
\begin{aligned}
    &z_{a,{\rm n}}^K+x_{a,{\rm n}}^K
    -
    J_K\!\left(
        z_{a,{\rm n}}^K+x_{a,{\rm n}}^K
    \right)\geq
    \mu_K
    \frac{2\mu_K}
         {\sqrt{4\mu_K^2+\mu_K^2/100}}
    \geq
    \frac{3\mu_K}{4}.
\end{aligned}
\]
Therefore, using the second component of
\eqref{eq:consensus-residual},
\[
\begin{aligned}
    \norm{\mathcal R_K(w_a^K)}
    &\geq
    z_{a,{\rm n}}^K
    -
    J_K\!\left(
        z_{a,{\rm n}}^K+x_{a,{\rm n}}^K
    \right)\\
    &=
    z_{a,{\rm n}}^K+x_{a,{\rm n}}^K
    -
    J_K\!\left(
        z_{a,{\rm n}}^K+x_{a,{\rm n}}^K
    \right)
    -
    x_{a,{\rm n}}^K\\
    &\geq
    \frac{3\mu_K}{4}
    -
    \frac{\mu_K}{4}=
    \frac{\mu_K}{2}=
    \frac{A_0}{8q}
    =
    \frac{1}{8\sqrt{2K}}
    =
    \frac{R_0}{8\sqrt{2K}}.
\end{aligned}
\]
Together with \eqref{eq:last-lower-bound}, this proves
\eqref{eq:main-lower-bounds}.

Finally, suppose that there existed a constant $C>0$, independent of
$K$, such that
\[
    \norm{\mathcal R_K(w^K)}
    \leq
    \frac{CR_0}{K}
\]
for every member of the family. Combining this inequality with the
first bound in \eqref{eq:main-lower-bounds} gives
\[
    \sqrt{K}\leq4\sqrt{2}\,C
\]
for every square $K=q^2$ with $q\geq16$, which is impossible because
these horizons are unbounded. The same argument, using the second
bound in \eqref{eq:main-lower-bounds}, rules out a uniform
$O(R_0/K)$ bound for the equal-weight ergodic average.
\end{proof}


\begin{remark}[Scope of the lower bound]
The instance $\mathcal P_K$ depends on the prescribed horizon $K$.
Thus, Theorem~\ref{thm:lower-bound} establishes the finite-horizon
worst-case statement
\[
    \forall\, K=q^2,\quad q\geq16,\qquad
    \exists\,\mathcal P_K
\]
such that the KKT residual at iteration $K$ is
$\Omega(K^{-1/2})$. It does not assert the existence of a single
fixed finite-dimensional instance whose ADMM residual remains
$\Omega(k^{-1/2})$ for all $k$.

\end{remark}

\section{Accelerated ADMM: complexity and solver developments}
\subsection{Accelerated KKT residual complexity}
Recent progress shows that acceleration can improve the nonergodic KKT
residual complexity of ADMM-type methods to $O(k^{-1})$, whereas such
a uniform rate cannot hold for classical ADMM in view of the lower bound established above. In particular, for the general convex optimization problem
\eqref{eq:general-model}, Zhang, Yuan, and Sun
\cite{zhang2022efficient} established a (nonergodic) $O(k^{-1})$
KKT residual rate under the Halpern--Peaceman--Rachford (HPR)
framework without semi-proximal terms. More recently, Sun et al.
\cite{SunYuanZhangZhao2025} developed an accelerated semi-proximal ADMM
framework through a degenerate proximal point method reformulation,
establishing nonergodic $O(k^{-1})$ and $o(k^{-1})$ KKT residual
complexity bounds. To illustrate the resulting improvement over classical ADMM, Algorithm~\ref{alg:halpern-admm} presents a Halpern-accelerated ADMM for \eqref{eq:general-model}.

\begin{algorithm}[H]
\caption{Halpern-accelerated ADMM for \eqref{eq:general-model}}
\label{alg:halpern-admm}
\begin{algorithmic}[1]
\Require penalty parameter $\sigma>0$, relaxation parameter
    $\rho\in(0,2]$, and initial point
    $w^0=(y^0,z^0,x^0)\in
    \operatorname{dom}f\times\operatorname{dom}g\times\mathcal X$
\For{$k=0,1,\ldots$}
    \State $\displaystyle
        \bar z^{k+1}=
        \operatorname*{argmin}_{z\in\mathcal Z}
        \mathcal L_\sigma(y^k,z;x^k)$
    \State $\displaystyle
        \bar x^{k+1}=
        x^k+\sigma(B_1y^k+B_2\bar z^{k+1}-c)$
    \State $\displaystyle
        \bar y^{k+1}=
        \operatorname*{argmin}_{y\in\mathcal Y}
        \mathcal L_\sigma(y,\bar z^{k+1};\bar x^{k+1})$
    \State $\displaystyle
        \bar w^{k+1}=(\bar y^{k+1},\bar z^{k+1},\bar x^{k+1})$
    \State $\displaystyle
        \widehat w^{k+1}
        =(1-\rho)w^k+\rho\bar w^{k+1}$
    \State $\displaystyle
        w^{k+1}
        =
        \frac{1}{k+2}w^0
        +\frac{k+1}{k+2}\widehat w^{k+1}$
\EndFor
\end{algorithmic}
\end{algorithm}

Here $\rho\in(0,2]$ is the relaxation parameter; the choice $\rho=2$
corresponds to the HPR method. The KKT residual complexity of
Algorithm~\ref{alg:halpern-admm}, established by Sun et al.\
\cite[Theorem~3.7]{SunYuanZhangZhao2025}, can be summarized as follows.

\begin{theorem}
\label{thm:halpern-admm-complexity}
Assume that the objective function of each ADMM subproblem in Algorithm~\ref{alg:halpern-admm} is strongly
convex and let $w^\star=(y^\star,z^\star,x^\star)$ be a KKT point.
Define $\mathcal M_0$ and its induced seminorm by
\begin{equation}
\label{eq:halpern-admm-metric}
    \mathcal M_0
    :=
    \begin{pmatrix}
        \sigma B_1^*B_1 & 0 & B_1^*\\
        0 & 0 & 0\\
        B_1 & 0 & \sigma^{-1}I_{\mathcal X}
    \end{pmatrix},
    \qquad
    \norm{u}_{\mathcal M_0}
    :=\sqrt{\langle u,\mathcal M_0u\rangle}.
\end{equation}
Then, for every $k\geq0$,
\begin{equation}
\label{eq:halpern-admm-kkt-complexity}
    \norm{\mathcal R(\bar w^{k+1})}
    \leq
    \frac{2(\sigma\norm{B_1^*}+1)
          \norm{w^0-w^\star}_{\mathcal M_0}}
         {\rho\sqrt{\sigma}\,(k+1)}.
\end{equation}
\end{theorem}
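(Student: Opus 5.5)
Theorem~\ref{thm:halpern-admm-complexity} will be obtained by recasting Algorithm~\ref{alg:halpern-admm} as a Halpern iteration for a nonexpansive operator in the seminorm $\norm{\cdot}_{\mathcal M_0}$, and then converting the fixed-point residual into the KKT residual \eqref{eq:general-residual}. First I would show that one ADMM sweep $w^k\mapsto\bar w^{k+1}$ (in the cyclic order $z\to x\to y$) is a degenerate proximal point step: there is a maximal monotone operator $\mathcal T$ whose zeros are exactly the KKT points \eqref{eq:kkt}, with
\[
0\in\mathcal T(\bar w^{k+1})+\mathcal M_0(\bar w^{k+1}-w^k).
\]
To verify this, write the optimality conditions of the three subproblems and substitute the $x$-update into the $y$-subproblem. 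The $z$-block of $\mathcal M_0$ vanishes, which reflects that $\bar w^{k+1}$ depends on $w^k$ only through $(y^k,x^k)$, equivalently through $\sigma B_1y^k+x^k$. The strong convexity assumption on the subproblems ensures $\bar w^{k+1}$ is single-valued, so the map $\widehat{\mathcal T}:w^k\mapsto\bar w^{k+1}$ is well defined.

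From this inclusion and the monotonicity of $\mathcal T$, $\widehat{\mathcal T}$ is firmly nonexpansive in $\norm{\cdot}_{\mathcal M_0}$. Consequently the relaxed map $(1-\rho)I+\rho\widehat{\mathcal T}$ is nonexpansive for $\rho\in(0,2]$, and every KKT point $w^\star$ is a fixed point. Algorithm~\ref{alg:halpern-admm} is then the Halpern iteration with anchor $w^0$ and weights $1/(k+2)$. I would invoke the standard Halpern estimate, which needs only nonexpansiveness in a seminorm together with the usual Lyapunov (potential) argument:
\[
\norm{w^k-\widehat w^{k+1}}_{\mathcal M_0}\le\frac{2\norm{w^0-w^\star}_{\mathcal M_0}}{k+1},
\]
and hence
\[
\norm{w^k-\bar w^{k+1}}_{\mathcal M_0}\le\frac{2\norm{w^0-w^\star}_{\mathcal M_0}}{\rho(k+1)}.
\]
The main obstacle is that $\mathcal M_0$ is only positive semidefinite. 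The Halpern Lyapunov argument must therefore be checked using only seminorm inner products, and the fixed-point set must be handled modulo the kernel of $\mathcal M_0$. I would do this by tracking the reduced variable $\sigma B_1y+x$ throughout.

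Finally, the seminorm residual is converted into $\mathcal R(\bar w^{k+1})$. From the subproblem optimality conditions:
\begin{itemize}
\item the $z$-component of $\mathcal R$ vanishes, because $-B_2^*\bar x^{k+1}\in\partial g(\bar z^{k+1})$;
\item the $y$-component is controlled by $\sigma\norm{B_1^*}\,\norm{B_2(\bar z^{k+1}-\cdots)}$-type terms, using nonexpansiveness of $\prox_f$;
\item the feasibility component equals $\sigma^{-1}(\bar x^{k+1}-x^k)$ corrected by $B_1(y^k-\bar y^{k+1})$.
\end{itemize}
Using
\[
\norm{u}_{\mathcal M_0}^2=\sigma^{-1}\norm{\sigma B_1u_y+u_x}^2,
\]
each of these pieces is bounded by $(\sigma\norm{B_1^*}+1)\sigma^{-1/2}\norm{w^k-\bar w^{k+1}}_{\mathcal M_0}$. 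Combining this with the Halpern estimate yields \eqref{eq:halpern-admm-kkt-complexity}.
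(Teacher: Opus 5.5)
Your overall route is the right one and matches the cited result. The paper gives no proof of its own; it quotes \cite[Theorem~3.7]{SunYuanZhangZhao2025}, which is a degenerate-proximal-point-plus-Halpern argument. Everything up to the Halpern bound checks out:
\begin{itemize}
\item the three optimality conditions give $\mathcal M_0(w^k-\bar w^{k+1})\in\mathcal T(\bar w^{k+1})$ for the KKT operator $\mathcal T$;
\item the sweep is firmly nonexpansive in $\norm{\cdot}_{\mathcal M_0}$;
\item $w^\star$ is an exact fixed point, by uniqueness of the subproblem solutions;
\item the Halpern estimate with weights $1/(k+2)$ carries over verbatim to a semi-inner product. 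Equivalently, it carries over to your reduced variable $\xi=\sigma B_1y+x$, on which the sweep is a genuine firmly nonexpansive map and the Halpern recursion acts linearly.
\end{itemize}
So the positive semidefiniteness of $\mathcal M_0$ is not a real obstacle.

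The step that fails as written is the conversion of the $y$-component. You attribute it to a $\sigma\norm{B_1^*}\norm{B_2(\bar z^{k+1}-\cdots)}$ term. That is the classical dual residual $\sigma B_1^*B_2(z^{k+1}-z^k)$ of the $y\to z\to x$ ordering. In the $z\to x\to y$ sweep of Algorithm~\ref{alg:halpern-admm} no such term arises, and it could not be bounded anyway: $z^k$ does not enter the sweep and the $z$-block of $\mathcal M_0$ is zero. Hence $\norm{B_2(\bar z^{k+1}-z^k)}$ is not controlled by $\norm{w^k-\bar w^{k+1}}_{\mathcal M_0}$.

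The correct quantity is
\[
d^k:=\sigma B_1(\bar y^{k+1}-y^k)+(\bar x^{k+1}-x^k)=\sigma\bigl(B_1\bar y^{k+1}+B_2\bar z^{k+1}-c\bigr),
\]
which satisfies $\norm{d^k}=\sqrt{\sigma}\,\norm{w^k-\bar w^{k+1}}_{\mathcal M_0}$. The $y$-subproblem optimality condition reads $-B_1^*(\bar x^{k+1}+d^k)\in\partial f(\bar y^{k+1})$, that is, $\bar y^{k+1}=\prox_f(\bar y^{k+1}-B_1^*\bar x^{k+1}-B_1^*d^k)$. Nonexpansiveness of $\prox_f$ then bounds the $y$-component by $\norm{B_1^*}\norm{d^k}$. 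The feasibility component is exactly $-\sigma^{-1}d^k$, and the $z$-component is zero. Adding the two nonzero pieces gives
\[
\norm{\mathcal R(\bar w^{k+1})}\le(\norm{B_1^*}+\sigma^{-1})\norm{d^k}=(\sigma\norm{B_1^*}+1)\sigma^{-1/2}\norm{w^k-\bar w^{k+1}}_{\mathcal M_0}.
\]
Note that the two pieces carry different constants, $\sqrt{\sigma}\norm{B_1^*}$ and $\sigma^{-1/2}$, whose sum is the stated one. Bounding each piece by the full constant, as your last sentence says, would lose a factor $\sqrt{2}$.
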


\begin{remark}
    For the hard instance $\mathcal P_K$ in Section~2, we have
$\sigma=1$, $B_1=I$, $B_2=-I$, and $w^\star=0$, so the strong-convexity
condition in Theorem~\ref{thm:halpern-admm-complexity} holds.
Moreover, by \eqref{eq:initial-point} and $A_0=1/\sqrt2$,
\[
    \norm{w^0-w^\star}_{\mathcal M_0}^2
    =
    \norm{y^0+x^0}^2
    =
    \norm{x_{\rm s}^0}^2+\lvert y_{\rm n}^0\rvert^2
    =
    2A_0^2
    =
    1.
\]
Since $\norm{B_1^*}=1$, Theorem~\ref{thm:halpern-admm-complexity}
therefore yields
\begin{equation}
\label{eq:halpern-hard-instance-bound}
    \norm{\mathcal R_K(\bar w^{k+1})}
    \leq
    \frac{4}{\rho(k+1)}.
\end{equation}
In particular, after $K$ ADMM sweeps, the output $\bar w^K$ satisfies
\begin{equation}
\label{eq:hpr-hard-instance-bound}
    \norm{\mathcal R_K(\bar w^K)}
    \leq
    \frac{4}{\rho K}.
\end{equation}
Thus, on the same instance for which classical ADMM has the
$\Omega(K^{-1/2})$ lower bounds in
\eqref{eq:main-lower-bounds}, the Halpern-accelerated method has the
explicit $4/(\rho K)$ KKT residual upper bound.

Figure~\ref{fig:fixed-hard-instance} illustrates this distinction for
one fixed instance $\mathcal P_K$ with $q=32$ and prescribed horizon
$K=1024$. The residual is plotted against the iteration index $k$,
while the lower bound in Theorem~\ref{thm:lower-bound} concerns its
value specifically at the prescribed horizon $k=K$.

\begin{figure}[H]
    \centering
    \includegraphics[width=0.8\linewidth]
    {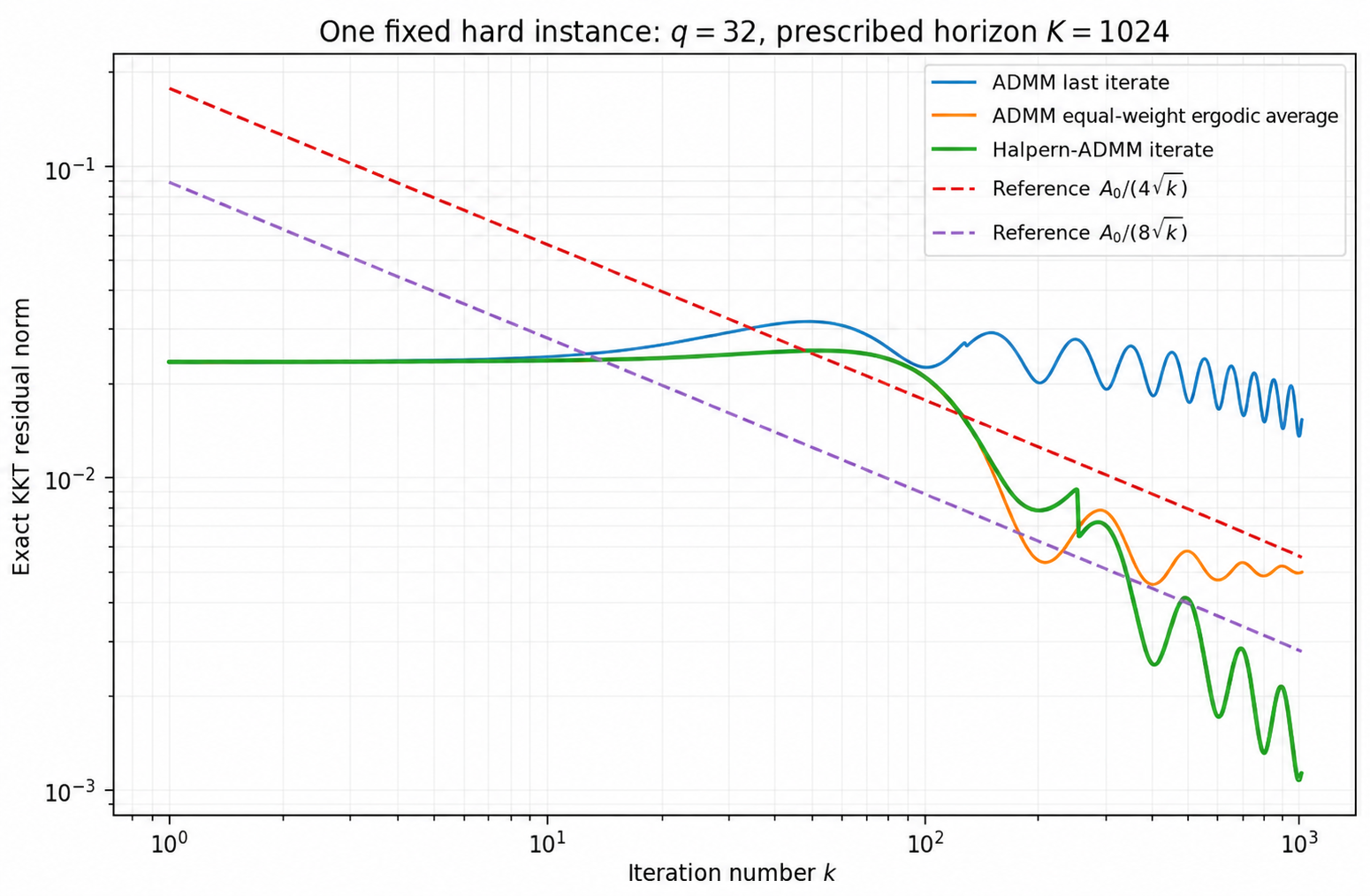}
    \caption{KKT residual along the ADMM iterates for the fixed hard
    instance $\mathcal P_K$ with $q=32$ and prescribed horizon
    $K=1024$, where the ergodic output at iteration $k$ is
    $k^{-1}\sum_{j=1}^k w^j$ and the Halpern--ADMM curve uses
    $\rho=1$. The theorem provides an
    $\Omega(K^{-1/2})$ lower bound at the prescribed iteration $k=K$.}
\label{fig:fixed-hard-instance}
\end{figure}
\end{remark}

\subsection{Comparison with ergodic averaging for LP}
We next illustrate the advantage of acceleration over standard ergodic
averaging for linear programming (LP), a special case of
\eqref{eq:general-model}. We construct a fixed-dimensional,
horizon-dependent LP family for which, at the prescribed horizon $K$,
the KKT residual and the distance to the KKT solution set are
$\Theta(K^{-1/2})$ for the equal-weight ergodic average, but
$\Theta(K^{-1})$ for the Halpern output.

Fix $q\in 8\mathbb N$ and set
\begin{equation}
\label{eq:lp-hard-parameters}
    K:=q^2,\qquad
    a_0:=\frac1{\sqrt2},\qquad
    \gamma_K:=\frac{a_0}{q},\qquad
    \theta_K:=\frac{\pi}{4q},\qquad
    a_K:=\sqrt2\sin\theta_K.
\end{equation}
Define
\begin{equation}
\label{eq:lp-hard-data}
    c_K:=
    \begin{pmatrix}\gamma_K\\\gamma_K\\0\end{pmatrix},
    \qquad
    A_K:=
    \begin{pmatrix}
        1&-1&0\\
        0&0&a_K
    \end{pmatrix},
    \qquad
    b_K:=
    \begin{pmatrix}0\\a_K\end{pmatrix}.
\end{equation}
Consider the primal--dual LP pair
\begin{align}
\label{eq:lp-hard-primal}
    \min_{x\in\mathbb R_+^3}\quad
    &\gamma_K(x_1+x_2)
    &&\text{s.t.}\quad A_Kx=b_K,\\
\label{eq:lp-hard-dual}
    \min_{y\in\mathbb R^2,\ z\in\mathbb R^3}\quad
    &\delta_{\mathbb R_+^3}(z)-a_Ky_2
    &&\text{s.t.}\quad A_K^*y+z=c_K.
\end{align}
The primal problem has the unique solution
$x^\star=(0,0,1)$, and the KKT solution set is
\begin{equation}
\label{eq:lp-kkt-solution-set}
    \mathcal W_{D,K}^\star
    =
    \left\{
    \left(
        (y_1,0),
        (\gamma_K-y_1,\gamma_K+y_1,0),
        (0,0,1)
    \right)
    \ \middle|\ \lvert y_1\rvert\leq\gamma_K
    \right\}.
\end{equation}
For $\sigma=1$, define
\begin{equation}
\label{eq:lp-dual-augmented-lagrangian}
    \mathcal L_{1,\mathrm{LP}}^D(y,z;x)
    :=
    \delta_{\mathbb R_+^3}(z)-a_Ky_2
    +\langle x,A_K^*y+z-c_K\rangle
    +\frac12\norm{A_K^*y+z-c_K}^2.
\end{equation}
We fix
\(
    \lambda_A:=2.
\)
Since
\[
    A_KA_K^*=\operatorname{Diag}(2,a_K^2),
    \qquad
    \lambda_AI-A_KA_K^*
    =
    \operatorname{Diag}(0,2-a_K^2)\succeq0,
\]
this choice is valid for every member of the family. We apply the linearized ADMM, which is equivalent to a PDHG scheme \cite{fazel2013hankel,esser2010general,chambolle2011first}, with update order $z\to x\to y$, as given in Algorithm~\ref{alg:lp-ladmm}.

\begin{algorithm}[H]
\caption{Linearized ADMM for \eqref{eq:lp-hard-dual} with fixed $\lambda_A=2$}
\label{alg:lp-ladmm}
\begin{algorithmic}[1]
\State Choose the initial point
$\displaystyle
w^0=(y^0,z^0,x^0)
=\left((0,0),c_K,(a_0,a_0,1+q^{-1})\right)
$ and set $\lambda_A=2$.
\For{$k=0,\ldots,K-1$}
    \State $\displaystyle
        z^{k+1}
        =
        \operatorname*{argmin}_{z\in\mathbb R^3}\,
        \mathcal L_{1,\mathrm{LP}}^D(y^k,z;x^k)$
    \State $\displaystyle
        x^{k+1}
        =
        x^k+A_K^*y^k+z^{k+1}-c_K$
    \State $\displaystyle
        y^{k+1}
        =
        \operatorname*{argmin}_{y\in\mathbb R^2}
        \left\{
        \mathcal L_{1,\mathrm{LP}}^D
        (y,z^{k+1};x^{k+1})
        +\frac12
        \norm{y-y^k}^2_{\lambda_A I-A_KA_K^*}
        \right\}$
\EndFor
\State \Return
    $\displaystyle
    w_a^K=\frac1K\sum_{j=1}^K w^j$,
    where $\displaystyle
    w^j=(y^j,z^j,x^j)
    \in\mathbb R^2\times\mathbb R^3\times\mathbb R^3$.
\end{algorithmic}
\end{algorithm}

For comparison, we apply Halpern acceleration to the same linearized ADMM sweep \cite{SunYuanZhangZhao2025}.

\begin{algorithm}[H]
\caption{Halpern-accelerated linearized ADMM for
\eqref{eq:lp-hard-dual} with fixed $\lambda_A=2$ and $\rho=1$}
\label{alg:lp-halpern-ladmm}
\begin{algorithmic}[1]
\State Choose the initial point
$\displaystyle
w^0=(y^0,z^0,x^0)
=
\left((0,0),c_K,(a_0,a_0,1+q^{-1})\right)
$
and set $\lambda_A=2$.
\For{$k=0,\ldots,K-1$}
    \State $\displaystyle
        \bar z^{k+1}
        =
        \operatorname*{argmin}_{z\in\mathbb R^3}\,
        \mathcal L_{1,\mathrm{LP}}^D(y^k,z;x^k)$
    \State $\displaystyle
        \bar x^{k+1}
        =
        x^k+A_K^*y^k+\bar z^{k+1}-c_K$
    \State $\displaystyle
        \bar y^{k+1}
        =
        \operatorname*{argmin}_{y\in\mathbb R^2}
        \left\{
        \mathcal L_{1,\mathrm{LP}}^D
        (y,\bar z^{k+1};\bar x^{k+1})
        +\frac12
        \norm{y-y^k}^2_{\lambda_A I-A_KA_K^*}
        \right\}$
    \State $\displaystyle
        \bar w^{k+1}
        =
        (\bar y^{k+1},\bar z^{k+1},\bar x^{k+1})$
    \State $\displaystyle
        w^{k+1}
        =
        \frac1{k+2}w^0
        +
        \frac{k+1}{k+2}\bar w^{k+1}$
\EndFor
\State \Return $\bar w^K$.
\end{algorithmic}
\end{algorithm}

For \eqref{eq:lp-hard-primal}--\eqref{eq:lp-hard-dual}, define the
primal objective and dual objective by
\[
    p_K(x)
    :=
    \gamma_K(x_1+x_2)+\delta_{\mathbb R_+^3}(x),
    \qquad
    d_K(y,z)
    :=
    a_Ky_2-\delta_{\mathbb R_+^3}(z).
\]
Thus \eqref{eq:lp-hard-dual} minimizes $-d_K$, and the primal--dual
gap is $p_K(x)-d_K(y,z)$.
We define the KKT residual for \eqref{eq:lp-hard-dual} by
\begin{equation}
\label{eq:lp-dual-kkt-residual}
    \mathcal R_{D,K}(y,z,x)
    :=
    \begin{pmatrix}
        A_Kx-b_K\\
        z-\Pi_{\mathbb R_+^3}(z-x)\\
        c_K-A_K^*y-z
    \end{pmatrix}.
\end{equation}
\begin{proposition}
\label{prop:lp-ergodic-halpern}
Let $q\in 8\mathbb N$, $K=q^2$, and consider the LP
\eqref{eq:lp-hard-primal}--\eqref{eq:lp-hard-dual}.
The equal-weight ergodic output $w_a^K$ of
Algorithm~\ref{alg:lp-ladmm} satisfies
\begin{equation}
\label{eq:lp-ergodic-rates}
\begin{aligned}
    \frac{7}{16\sqrt K}
    &\leq
    \norm{\mathcal R_{D,K}(w_a^K)}
    \leq
    \frac{1}{\sqrt K},\\
    \frac{7}{16\sqrt K}
    &\leq
    \operatorname{dist}(w_a^K,\mathcal W_{D,K}^\star)
    \leq
    \frac{1}{\sqrt K},\\
    0
    &\leq
    p_K(x_a^K)-d_K(y_a^K,z_a^K)
    =
    \frac{q-1}{2qK}
    +\frac{1-\cos^K\theta_K}{qK}
    \leq
    \frac{5}{8K}.
\end{aligned}
\end{equation}
In contrast, the output $\bar w^K$ of the Halpern-accelerated
Algorithm~\ref{alg:lp-halpern-ladmm} satisfies
\begin{equation}
\label{eq:lp-halpern-rate}
\begin{aligned}
    \frac{1}{K}
    &\leq
    \norm{\mathcal R_{D,K}(\bar w^K)}
    \leq
    \frac{\sqrt{1+3/K}}{K},\\
    \frac{1}{K}
    &\leq
    \operatorname{dist}(\bar w^K,\mathcal W_{D,K}^\star)
    \leq
    \frac{\sqrt{3+1/K}}{K},\\
    0
    &\leq
    p_K(\bar x^K)-d_K(\bar y^K,\bar z^K)
    =
    \frac{1-\cos^K\theta_K}{qK}
    \leq
    \frac{1}{K^{3/2}}.
\end{aligned}
\end{equation}
Consequently, the KKT residual and solution-set distance are
$\Theta(K^{-1/2})$ for the equal-weight ergodic output, but
$\Theta(K^{-1})$ for the Halpern output.
\end{proposition}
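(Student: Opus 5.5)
The plan is to exploit the fact that, as in the proof of Theorem~\ref{thm:lower-bound}, the linearized ADMM sweep on \eqref{eq:lp-hard-dual} decouples into two independent blocks. I will compute all iterates of Algorithm~\ref{alg:lp-ladmm} and Algorithm~\ref{alg:lp-halpern-ladmm} in closed form, and then evaluate $\mathcal R_{D,K}$, the distance to $\mathcal W_{D,K}^\star$ and the gap explicitly.

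\textbf{Step 1: reduce the sweep to the primal variable.} First I will rewrite one sweep as a PDHG step on the primal variable $x$. The $z$-step gives $z^{k+1}=\Pi_{\mathbb R_+^3}(c_K-A_K^*y^k-x^k)$. Hence the multiplier update becomes
\[
x^{k+1}=\Pi_{\mathbb R_+^3}\bigl(x^k-(c_K-A_K^*y^k)\bigr).
\]
The optimality condition of the linearized $y$-step with $\lambda_A=2$ gives
\[
y^{k+1}=y^k+\tfrac12\bigl(b_K-A_K(2x^{k+1}-x^k)\bigr).
\]
Since the first row of $A_K$ only involves $x_1-x_2$, the iteration splits into two blocks.
\begin{itemize}
\item[(a)] The block $(x_1,x_2,y_1)$: the symmetric initialization $x_1^0=x_2^0=a_0$, $y_1^0=0$ gives $x_1^k=x_2^k$ and $y_1^k=0$ for all $k$. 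Then $x_1^{k+1}=\max(x_1^k-\gamma_K,0)$. Since $a_0=q\gamma_K$, this yields $x_1^k=x_2^k=(q-k)_+\gamma_K$, which vanishes for $k\ge q$.
\item[(b)] The block $(x_3,y_2)$: as long as $x_3$ stays positive, the projection is inactive. The map is then linear, $x_3^{k+1}=x_3^k+a_Ky_2^k$ and $y_2^{k+1}=y_2^k+\tfrac{a_K}{2}(1-2x_3^{k+1}+x_3^k)$. Using $a_K=\sqrt2\sin\theta_K$, I expect this $2\times2$ matrix (in the shifted variable $x_3-1$) to be similar to $\cos\theta_K$ times a rotation by $\theta_K$, exactly as in \eqref{eq:subspace-dynamics}. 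Starting from $x_3^0-1=q^{-1}$, the iterates should then have magnitude $O(q^{-1}\cos^k\theta_K)$. This is the source of the $\cos^K\theta_K$ terms.
\end{itemize}
In the same step I record $z^{k}$ from the formula above. Its components on block (a) are $\gamma_K$ once $x_1$ has hit $0$, and are determined by the clipping before that.

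\textbf{Step 2: the ergodic output.} Averaging gives
\[
x_{a,1}^K=x_{a,2}^K=\frac1K\sum_{j=1}^{q}(q-j)\gamma_K=\frac{a_0(q-1)}{2K}\approx\frac{1}{2\sqrt{2K}}.
\]
The block-(b) averages are $O(1/(qK))$ by a geometric sum. Three facts then give the bounds in \eqref{eq:lp-ergodic-rates}.
\begin{itemize}
\item[(i)] Because $x_a^K\ge0$ and $z_a^K$ is near $(\gamma_K,\gamma_K,0)$, the complementarity component $z-\Pi(z-x)$ in \eqref{eq:lp-dual-kkt-residual} evaluates to $\min(z,x)$ componentwise. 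Its first two entries are $\approx\min(\gamma_K,x_{a,1}^K)$, which is of order $K^{-1/2}$.
\item[(ii)] The distance to \eqref{eq:lp-kkt-solution-set} is at least $\sqrt2\,x_{a,1}^K$, since every KKT point has $x_1=x_2=0$. For the upper bound I pick $y_1=0$ in the set.
\item[(iii)] The gap is $\gamma_K(x_{a,1}^K+x_{a,2}^K)-a_Ky_{a,2}^K$. The block-(b) telescoping sum should produce the exact term $(1-\cos^K\theta_K)/(qK)$.
\end{itemize}
The explicit constants $7/16$, $1$ and $5/8$ then follow from $q\ge8$ and elementary trigonometric bounds of the type used in Theorem~\ref{thm:lower-bound}.

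\textbf{Step 3: the Halpern output.} Anchoring is linear, and $w^0$ respects both the block splitting and the symmetry $x_1=x_2$, $y_1=0$. So the Halpern iterates decouple in the same way. On block (b) the operator is affine with fixed point $(1,0)$. The standard Halpern identity then gives $w^{k}-w^\star=\frac{1}{k+1}\sum_j(\cdots)$, which yields the $(1-\cos^K\theta_K)/(qK)$ gap term. On block (a), the scalar recursion is
\[
x^{k+1}=\tfrac1{k+2}a_0+\tfrac{k+1}{k+2}\max(x^k-\gamma_K,0).
\]
I will show by induction that the clipping is active from early on (since $a_0/(k+2)<\gamma_K$ once $k+2>q$). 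It follows that $\bar x_1^K=\max(x^{K-1}-\gamma_K,0)=0$. The residual on block (a) then comes from the feasibility entry $c_K-A_K^*\bar y-\bar z$, which is of order $a_0/K$ through the anchor. This gives the $1/K$ lower bound and the $\sqrt{1+3/K}/K$ upper bound, with the distance bound handled in the same way.

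\textbf{Main obstacle.} I expect the main difficulty to be the exact bookkeeping of the clipped Halpern recursion on block (a) for $k\le q$. There the anchor term and the projection interact, and I must verify that $\bar z^K$ and $\bar x^K$ land exactly where claimed in order to get matching $1/K$ upper and lower constants. I would first tabulate this recursion and guess the closed form $x^k=\frac{a_0}{k+1}$-type expressions, then verify them by induction. A secondary concern is confirming that $x_3$ never hits $0$, so that block (b) really is linear. This should follow from $|x_3^k-1|\le q^{-1}$.
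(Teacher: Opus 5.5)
Your overall route is the paper's: split the sweep into the $(x_1,x_2,y_1)$ block and the $(x_3,y_2)$ block, compute both orbits in closed form for the ergodic and the Halpern scheme, and substitute into $\mathcal R_{D,K}$, the distance, and the gap. Several pieces are correct:
\begin{itemize}
\item your PDHG form of the sweep;
\item the orbit $x_1^k=x_2^k=(q-k)_+\gamma_K$;
\item the spectrum $\cos\theta_K e^{\pm i\theta_K}$ of the block-(b) matrix;
\item the ergodic lower bounds via $\sqrt2\,x_{a,1}^K=(q-1)/(2K)$.
\end{itemize}
Two steps, however, do not go through as written.

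\textbf{Halpern block (a).} The projection is not active once $k+2>q$. Your justification presupposes $x_1^k=a_0/(k+1)$, which is false early on. While the projection is inactive, $S^k:=(k+1)x_1^k$ satisfies $S^{k+1}=S^k+a_0-(k+1)\gamma_K$. Hence $x_1^k=a_0-k\gamma_K/2$, which is still $\geq\gamma_K$ for every $k\leq 2q-2$; for instance $x_1^q=a_0/2>\gamma_K$. Only for $k\geq 2q-1$ does one get $x_1^k=a_0/(k+1)<\gamma_K$. From then on, your observation $a_0/(k+2)<\gamma_K$ keeps the projection active. The needed values $\bar x_1^K=0$ and $\bar z_1^K=\gamma_K-a_0/K$ do follow, because $K-1=q^2-1\geq 2q-1$. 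But an induction built on clipping from $k\approx q$ would fail.

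\textbf{Block (b) and the hypothesis $q\in 8\mathbb N$.} This is the more substantive gap. You never use $q\in 8\mathbb N$, yet your block-(b) claims rest on it. Set $u^k:=x_3^k-1=q^{-1}\cos^{k-1}\theta_K\cos((k-1)\theta_K)$ and $v^k:=a_Ky_2^k=u^{k+1}-u^k$. Telescoping gives $\sum_{j=1}^K v^j=(\cos^K\theta_K\cos(K\theta_K)-1)/q$. The geometric sum gives $\sum_{j=1}^K u^j=q^{-1}\bigl[1-\cos^K\theta_K\cos(K\theta_K)+\cot\theta_K\cos^K\theta_K\sin(K\theta_K)\bigr]$.

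Since $\cot\theta_K\approx 4q/\pi$ and $\cos^K\theta_K\approx e^{-\pi^2/32}$, the last contribution is approximately $\frac4\pi e^{-\pi^2/32}\sin(K\theta_K)$. This is of order one unless $\sin(K\theta_K)=0$. So, in general, the average of $x_3^j-1$ is only $O(1/K)$, not $O(1/(qK))$ ``by a geometric sum''. Likewise, telescoping yields the gap term $\frac{1-\cos^K\theta_K\cos(K\theta_K)}{qK}$, which equals the stated $\frac{1-\cos^K\theta_K}{qK}$ only when $\cos(K\theta_K)=1$. Without this, the exact gap identities in both displays fail. So do the exact block-(b) outputs from which the constants $\sqrt{1+3/K}$ and $\sqrt{3+1/K}$ are computed.

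The missing observation is $K\theta_K=q\pi/4\in 2\pi\mathbb N$, which collapses both sums to $\pm(1-\cos^K\theta_K)/q$. With it, your Halpern identity finishes the argument. Let $M_K$ be the block-(b) matrix in the variables $(u,v)$ and $\xi^0=(1/q,0)$. The Halpern block-(b) output is $M_K\xi^{K-1}=\frac1K\sum_{j=1}^K M_K^j\xi^0$, which is exactly the ergodic block-(b) average. The no-clipping check holds because every $u^j$, and hence every average of them, lies in $[-1/q,1/q]$. What remains is the direct substitution the paper performs.
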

\begin{proof}
We first analyze Algorithm~\ref{alg:lp-ladmm}. By symmetry of the
first two coordinates and $y_1^0=0$, a direct induction gives
\begin{equation}
\label{eq:lp-ladmm-first-block-orbit}
    y_1^k=0,\qquad
    (x_1^k,x_2^k)
    =
    a_0\left(1-\frac{k}{q}\right)_+
    \begin{pmatrix}1\\1\end{pmatrix},
\end{equation}
and
\[
    (z_1^k,z_2^k)=
    \begin{cases}
        (\gamma_K,\gamma_K), & k=0\ \text{or}\ k\geq q+1,\\
        (0,0),               & 1\leq k\leq q.
    \end{cases}
\]

For the third coordinate, set
\[
    u^k:=x_3^k-1,
    \qquad
    v^k:=a_Ky_2^k.
\]
The $z$-update gives
\[
    z_3^{k+1}
    =
    \max\{-1-u^k-v^k,0\}.
\]
On the branch $z_3^{k+1}=0$, the $x$-update and the optimality
condition for the $y_2$-subproblem yield
\[
    u^{k+1}=u^k+v^k,
    \qquad
    v^{k+1}
    =
    (1-\sin^2\theta_K)v^k
    -\sin^2\theta_K\,u^{k+1},
\]
where we used
$a_K^2/\lambda_A=\sin^2\theta_K$. Hence
\begin{equation}
\label{eq:lp-ladmm-third-block-recurrence}
    \begin{pmatrix}u^{k+1}\\v^{k+1}\end{pmatrix}
    =
    \begin{pmatrix}
        1&1\\
        -\sin^2\theta_K&1-2\sin^2\theta_K
    \end{pmatrix}
    \begin{pmatrix}u^k\\v^k\end{pmatrix}.
\end{equation}
Since $(u^0,v^0)=(q^{-1},0)$, induction gives
\begin{equation}
\label{eq:lp-ladmm-third-block-closed-form}
\begin{aligned}
    u^k
    &=
    \frac{\cos^k\theta_K}{q}
    \left[
        \cos(k\theta_K)
        +\tan\theta_K\sin(k\theta_K)
    \right] \\
    &=
    \frac{\cos^{k-1}\theta_K}{q}
    \cos((k-1)\theta_K),\\
    v^k&=u^{k+1}-u^k.
\end{aligned}
\end{equation}
In particular,
\[
    \lvert u^k\rvert
    \leq\frac{1}{q\cos\theta_K}<1,
\]
and therefore
\[
    -1-u^k-v^k=-1-u^{k+1}<0.
\]
Thus $z_3^{k+1}=0$ for every $k$, verifying the branch used above.
Moreover, the matrix in
\eqref{eq:lp-ladmm-third-block-recurrence} has determinant
$\cos^2\theta_K>0$. Since its initial state is nonzero, the third
coordinate cannot reach the optimal state in finitely many
iterations. Hence the ordinary sequence does not terminate finitely.

Since $q\in 8\mathbb N$, $K=q^2$, and
$\theta_K=\pi/(4q)$, we have $K\theta_K\in 2\pi\mathbb N$.
Using \eqref{eq:lp-ladmm-third-block-closed-form} and the finite
geometric-sum formula gives
\[
    \sum_{j=1}^K u^j
    =
    \frac{1-\cos^K\theta_K}{q},
    \qquad
    \sum_{j=1}^K v^j
    =
    -\frac{1-\cos^K\theta_K}{q}.
\]
Together with \eqref{eq:lp-ladmm-first-block-orbit}, this yields
\begin{equation}
\label{eq:lp-ladmm-averages}
\begin{aligned}
    y_a^K
    =\begin{pmatrix}
        0\\
        -\dfrac{1-\cos^K\theta_K}{a_K q K}
      \end{pmatrix}, \quad
    z_a^K
    =\begin{pmatrix}
        \dfrac{K-q}{K}\gamma_K\\
        \dfrac{K-q}{K}\gamma_K\\
        0
      \end{pmatrix},\quad
    x_a^K
    =\begin{pmatrix}
        \dfrac{a_0(q-1)}{2K}\\
        \dfrac{a_0(q-1)}{2K}\\
        1+\dfrac{1-\cos^K\theta_K}{qK}
      \end{pmatrix}.
\end{aligned}
\end{equation}
Substituting \eqref{eq:lp-ladmm-averages} into
\eqref{eq:lp-dual-kkt-residual}, and minimizing over
$\mathcal W_{D,K}^\star$, gives
\begin{equation}
\label{eq:lp-ergodic-residual-distance}
\begin{aligned}
    \norm{\mathcal R_{D,K}(w_a^K)}^2
    &=
    \frac{(q-1)^2}{4K^2}
    +\frac1{K^2}
    +(1+a_K^2)
      \left(\frac{1-\cos^K\theta_K}{qK}\right)^2,\\
    \operatorname{dist}^2(w_a^K,\mathcal W_{D,K}^\star)
    &=
    \frac{(q-1)^2}{4K^2}
    +\frac1{K^2}
    +(1+a_K^{-2})
      \left(\frac{1-\cos^K\theta_K}{qK}\right)^2.
\end{aligned}
\end{equation}
Here the closest point in $\mathcal W_{D,K}^\star$ has
$y_1^\star=0$.

From $\theta_K=\pi/(4q)$ and
$a_K=\sqrt2\sin\theta_K$, we have
\[
    \frac1{\sqrt2q}\leq a_K\leq\sqrt2,
    \qquad
    0
    \leq
    \frac{1-\cos^K\theta_K}{qK}
    \leq
    \frac1{qK}.
\]
Since $K=q^2$ and $q\geq8$,
\[
    \frac{q-1}{2K}
    =
    \frac{1-q^{-1}}{2\sqrt K}
    \geq
    \frac7{16\sqrt K},
\]
which gives the lower bounds in \eqref{eq:lp-ergodic-rates}.
For the upper bounds, \eqref{eq:lp-ergodic-residual-distance} gives
\[
\begin{aligned}
    \norm{\mathcal R_{D,K}(w_a^K)}^2
    &\leq
    \frac1{4K}+\frac1{K^2}+\frac3{K^3}
    \leq\frac1K,\\
    \operatorname{dist}^2(w_a^K,\mathcal W_{D,K}^\star)
    &\leq
    \frac1{4K}+\frac3{K^2}+\frac1{K^3}
    \leq\frac1K.
\end{aligned}
\]
Direct substitution into the primal objective and dual objective
further yields
\[
    p_K(x_a^K)-d_K(y_a^K,z_a^K)
    =
    \frac{q-1}{2qK}
    +\frac{1-\cos^K\theta_K}{qK}.
\]
Thus,
\[
    0
    \leq
    p_K(x_a^K)-d_K(y_a^K,z_a^K)
    \leq
    \frac1{2K}+\frac1{qK}
    \leq
    \frac5{8K}.
\]
This proves \eqref{eq:lp-ergodic-rates}.

We next consider the Halpern-accelerated
Algorithm~\ref{alg:lp-halpern-ladmm}. By symmetry,
$x_1^k=x_2^k$ and $y_1^k=0$. Its first two post-sweep coordinates
satisfy
\[
    \bar x_1^{k+1}
    =
    \bar x_2^{k+1}
    =
    (x_1^k-\gamma_K)_+,
    \qquad
    \bar z_1^{k+1}
    =
    \bar z_2^{k+1}
    =
    (\gamma_K-x_1^k)_+.
\]
The Halpern update therefore gives
\[
    x_1^{k+1}=x_2^{k+1}
    =
    \frac{a_0}{k+2}
    +
    \frac{k+1}{k+2}(x_1^k-\gamma_K)_+.
\]
A direct induction shows that
\[
    x_1^k=x_2^k=
    \begin{cases}
        a_0-k\gamma_K/2, & 0\leq k\leq2q-2,\\
        a_0/(k+1),       & k\geq2q-1.
    \end{cases}
\]
Since $K-1\geq2q-1$, the $K$th post-sweep point satisfies
\[
    \bar x_1^K=\bar x_2^K=0,
    \qquad
    \bar z_1^K=\bar z_2^K
    =
    \gamma_K-\frac{a_0}{K}.
\]

For the third coordinate, let
\[
    \xi^k:=(u_H^k,v_H^k),
    \qquad
    u_H^k:=x_3^k-1,
    \qquad
    v_H^k:=a_Ky_2^k,
\]
denote the Halpern anchor state, and let $M_K$ be the matrix in
\eqref{eq:lp-ladmm-third-block-recurrence}. We prove simultaneously
by induction that the zero branch is active and that
\[
    \xi^k
    =
    \frac1{k+1}\sum_{j=0}^kM_K^j\xi^0.
\]
The displayed identity is immediate for $k=0$. Suppose that it holds
at some $k$. The candidate post-sweep state on the zero branch is
$M_K\xi^k$, whose first component is
\[
    \frac1{k+1}\sum_{j=1}^{k+1}u^j.
\]
By \eqref{eq:lp-ladmm-third-block-closed-form}, every term in this
average is strictly larger than $-1$. Hence
\[
    \bar z_3^{k+1}
    =
    \max\{-1-(M_K\xi^k)_1,0\}
    =
    0,
\]
so this candidate is indeed the actual post-sweep state. The Halpern
update then gives
\[
    \xi^{k+1}
    =
    \frac1{k+2}\xi^0
    +
    \frac{k+1}{k+2}M_K\xi^k
    =
    \frac1{k+2}\sum_{j=0}^{k+1}M_K^j\xi^0,
\]
which closes the induction. Consequently, the $K$th post-sweep state is
\[
    M_K\xi^{K-1}
    =
    \frac1K\sum_{j=1}^KM_K^j\xi^0
    =
    \frac1K\sum_{j=1}^K(u^j,v^j)
    =
    \frac{1-\cos^K\theta_K}{qK}(1,-1).
\]
Thus,
\[
    \bar x_3^K-1
    =\frac{1-\cos^K\theta_K}{qK},
    \qquad
    a_K\bar y_2^K
    =-\frac{1-\cos^K\theta_K}{qK},
    \qquad
    \bar z_3^K=0.
\]
Consequently,
\begin{equation}
\label{eq:lp-halpern-output}
\begin{aligned}
    \bar y^K
    =\begin{pmatrix}
        0\\
        -\dfrac{1-\cos^K\theta_K}{a_K q K}
      \end{pmatrix}, \quad
    \bar z^K
    =\begin{pmatrix}
        \gamma_K-a_0/K\\
        \gamma_K-a_0/K\\
        0
      \end{pmatrix}, \quad
    \bar x^K
    =\begin{pmatrix}
        0\\
        0\\
        1+\dfrac{1-\cos^K\theta_K}{qK}
      \end{pmatrix}.
\end{aligned}
\end{equation}
Substitution into \eqref{eq:lp-dual-kkt-residual} and minimization over
$\mathcal W_{D,K}^\star$ give
\begin{equation}
\label{eq:lp-halpern-residual-distance}
\begin{aligned}
    \norm{\mathcal R_{D,K}(\bar w^K)}^2
    &=
    \frac1{K^2}
    +(1+a_K^2)
      \left(\frac{1-\cos^K\theta_K}{qK}\right)^2,\\
    \operatorname{dist}^2(\bar w^K,\mathcal W_{D,K}^\star)
    &=
    \frac1{K^2}
    +(1+a_K^{-2})
      \left(\frac{1-\cos^K\theta_K}{qK}\right)^2.
\end{aligned}
\end{equation}
The term $K^{-2}$ gives the lower bounds in
\eqref{eq:lp-halpern-rate}, while the bounds established above give
\[
\begin{aligned}
    \norm{\mathcal R_{D,K}(\bar w^K)}^2
    &\leq
    \frac{1+3/K}{K^2},\\
    \operatorname{dist}^2(\bar w^K,\mathcal W_{D,K}^\star)
    &\leq
    \frac{3+1/K}{K^2}.
\end{aligned}
\]
Finally, \eqref{eq:lp-halpern-output} gives
\[
    p_K(\bar x^K)-d_K(\bar y^K,\bar z^K)
    =
    \frac{1-\cos^K\theta_K}{qK},
\]
and hence
\[
    0
    \leq
    p_K(\bar x^K)-d_K(\bar y^K,\bar z^K)
    \leq
    \frac1{qK}
    =
    \frac1{K^{3/2}}.
\]
This proves \eqref{eq:lp-halpern-rate} and completes the proof.
\end{proof}

This proposition shows that $O(K^{-1})$ ergodic primal feasibility,
dual feasibility, and primal--dual gap do not imply the same rate for
the distance to the KKT solution set. In contrast, under an error
bound, the KKT residual is comparable to this distance.
Furthermore, the KKT residual and solution-set distance of the equal-weight ergodic average are both $\Theta(K^{-1/2})$, whereas Halpern acceleration achieves $\Theta(K^{-1})$ for both.

Although Applegate et al.\ \cite{ApplegateHinderLuLubin2023}
established $O(k^{-1})$ ergodic rates for PDHG, a linearized ADMM
method \cite{esser2010general,chambolle2011first}, in terms of primal
infeasibility, dual infeasibility, and the primal--dual gap, these
bounds yield only an $O(k^{-1/2})$ rate for the KKT residual
\eqref{eq:lp-dual-kkt-residual}. Similarly, the $O(k^{-1})$ ergodic
rates of Chambolle and Pock
\cite{chambolle2011first,chambolle2016ergodic} for primal--dual
gap-type measures yield only an $O(k^{-1/2})$ KKT residual rate.
Our example shows that this order is attained at the prescribed
horizon. In contrast, HPR-LP \cite{chen2026hprlp} directly achieves
an $O(k^{-1})$ KKT residual complexity bound for LP.

\subsection{Solver developments}

Beyond these complexity guarantees, accelerated ADMM-type methods have
also led to efficient large-scale solvers. Specifically, HPR-LP
\cite{chen2026hprlp} has demonstrated strong performance on large-scale
LP instances and outperformed GPU implementations of PDLP based on
ergodic PDHG on several benchmark sets
\cite{applegate2021practical,ApplegateHinderLuLubin2023,lu2025cupdlp}.
The HPR-LP framework further led to the reflected restarted
Halpern--PDHG method (r2HPDHG) \cite{lu2024restarted}, implemented in
cuPDLPx \cite{lu2025cupdlpx}, which is a special case of the HPR
method
\cite{SunYuanZhangZhao2025,chen2026hprlp,ChenSunYuanZhangZhao2025}.
Moreover, the HPR framework has shown strong empirical performance
beyond LP, including optimal transport through HOT~\cite{zhang2025hot}
and convex quadratic composite programming through
HPR-QP~\cite{chen2025hprqp}.

\section{Conclusion}
We constructed a fixed-dimensional, horizon-dependent family for
which, at each prescribed square horizon $K=q^2$, the KKT residual of
classical ADMM is $\Omega(K^{-1/2})$ at both the last iterate and the
equal-weight ergodic average. Since these horizons are unbounded,
neither output admits a uniform $O(K^{-1})$ KKT residual bound for
classical ADMM. We further constructed an LP example showing that $O(K^{-1})$ primal feasibility, dual feasibility, and primal--dual gap do not imply the same rate for the distance to the KKT solution set, whereas under an error-bound condition the KKT residual is comparable to this distance up to constant factors. The same example also demonstrates that
Halpern acceleration can improve both the KKT residual and the
solution-set distance from $\Theta(K^{-1/2})$ for the equal-weight
ergodic average to $\Theta(K^{-1})$.

\section*{Acknowledgments}
GPT-5.6 was used as an auxiliary tool in developing the lower-bound
construction. All mathematical arguments were independently verified
by the authors. The work of Defeng Sun was supported by the Research Center for Intelligent Operations Research, the RGC Senior Research Fellow Scheme (No. SRFS2223-5S02), and the RGC General Research Fund (Project No. 15307822). The work of Yancheng Yuan was supported by the RGC Early Career Scheme (Project No. 25305424), the NSFC Young Scientists Fund (Project No. 12501440), and the Research Center for Intelligent Operations Research. The work of Xinyuan Zhao was supported in part by the National Natural Science Foundation of China under Project No. 12271015.

\bibliographystyle{plain}
\bibliography{reference}

\end{document}